\theoremstyle{plain}
\newtheorem{theorem}{Theorem}[section]
\newtheorem{lemma}[theorem]{Lemma}
\newtheorem{proposition}[theorem]{Proposition}
\theoremstyle{definition}
\newtheorem{definition}{Definition}[section]
\newtheorem{example}[definition]{Example}
\theoremstyle{remark}
\newtheorem{remark}[theorem]{Remark}
\date{}
\title{Linear sequential dynamical systems, incidence algebras, and M\"{o}bius functions}
\author{Ricky X. F. Chen$^1$, Christian M. Reidys$^2$\\
	\small Virginia Bioinformatics Institute and Dept. of Mathematics,\\[-0.8ex]
	\small Virginia Tech, 1015 Life Science Circle,\\[-0.8ex]
	\small Blacksburg, VA 24061, USA\\
	\small\tt $^1$chen.ricky1982@gmail.com, $^2$duck@santafe.edu
}
\begin{document}
\maketitle
\noindent{\bf Abstract.}
A sequential dynamical system (SDS) consists of a graph, a set of local functions
and an update schedule. A linear sequential dynamical system is an SDS whose local
functions are linear. In this paper, we derive an explicit closed formula for any linear SDS
as a synchronous dynamical system. We also show
constructively, that any synchronous linear system can be expressed as a
linear SDS, i.e.~it can be written as a product of linear local functions. 
Furthermore, we study the connection between linear SDS and the incidence algebras of
partially ordered sets (posets). Specifically, we show that the M\"{o}bius function of any
poset can be computed via an SDS, whose graph is induced by the Hasse diagram
of the poset.
Finally, we prove a cut theorem for the M\"{o}bius functions of posets with respect to certain chain
decompositions.

\noindent{\bf Keywords.}
	sequential dynamical system; linear SDS; partially ordered set; incidence algebra;
	M\"{o}bius function; homogeneous chain decomposition
	
	
	
	

\section{Introduction}
\label{sec1}
Modeling discrete processes is an important topic in mathematics as it is of relevance
for the understanding of network dynamics~\cite{rei1,rei2,rei3,rei4,laub,BE,kauf,vonn,reg,wolf}. Sequential dynamical systems
(SDS\footnote{We will write SDS in singular as well as plural form.})
\cite{rei1,rei2,rei3,rei4,rei5} provide such a framework.
A \emph{sequential dynamical system} over a graph involves three ingredients: a (dependency) graph to
represent the interaction relations among entities, local functions and an update schedule in order
to specify the order of state updates. SDS are motivated by the work on Cellular Automata
(CA)~\cite{vonn,wolf} and Boolean Networks~\cite{kauf}. The key question is to deduce
phase space properties from graph, local maps and update schedule.

It is not easy to deduce phase space properties of SDS over general graphs and general local functions
with varying update schedules. The question is intriguing though, since sometimes dramatic changes
can be induced by ``slightly'' changing the update schedule, while keeping everything else fixed.
In praxis, such changes can inadvertently be introduced when implementing an SDS on a specific
computer architecture.

In \cite{aledo1,rei1,rei2,rei3,laub,bill,rchen,defant,laub2} the reader can find a plethora of results on
dynamical systems over particular graphs and updating functions.
Linear systems together with a parallel update schedule, i.e.~dynamical systems, in which the
local functions are linear, are studied in~\cite{BE,tol,laub3}.

In~\cite{bill}, {linear SDS} are studied and a formal composition formula of matrices is obtained,
characterizing a linear SDS. We improve this result by deriving a simple closed matrix expression
for any linear SDS, thereby expressing a linear SDS as a synchronous system.
We provide an explicit construction expressing any synchronous linear system as an SDS. That is
using the LU-decomposition of a matrix, we construct a graph together with a family of linear local
functions. This construction is of interest in the context of the general question whether or not
a synchronous dynamical system, say $\mathbb{F}_2^n\rightarrow \mathbb{F}_2^n$, can be expressed as an SDS. For a general synchronous dynamical system, it is generally not easy to find a non-trivial SDS expression.

We then extend our framework to linear SDS over words, i.e.~update schedules in which vertices can
appear multiple times but at least once. We establish a closed matrix formula for these systems
and derive a relation to the M\"{o}bius functions of partially ordered sets associated with composing
such words.

The paper is organized as follows: in Section~\ref{sec2}, we review some basic concepts and facts on SDS.
In Section~\ref{sec3}, we study linear SDS with permutation update schedules and compute in Theorem~\ref{2thm1}
a matrix that completely describes such a system.
Using Theorem~\ref{2thm1}, we prove constructively in Theorem~\ref{3thm2} that any synchronous
linear system has a linear SDS equivalent. 
We also present an approach to compute the M\"{o}bius function of a poset with an SDS.
In Section~\ref{sec4}, we extend our results to linear SDS, where the update schedules are words over
all vertices (linear SDS over words). We compute in Theorem~\ref{4thm1} an explicit closed formula for
these, that restricts to Theorem~\ref{2thm1} proved in Section~\ref{sec3}. The concatenation of linear SDS
over words gives rise to a relation of the M\"{o}bius functions of posets, which allows us 
to prove a cut theorem in Theorem~\ref{T:rel} w.r.t. homogeneous chain decompositions of posets.
In Section~\ref{sec5}, we summarize the results of this paper, and discuss extending the framework to block-sequential dynamical systems as well as future work.

\section{Linear SDS and the incidence algebras of posets}\label{sec2}

Let $G$ be a connected, simple graph with vertex set $V(G)=\{v_1,\dots,v_n\}$. To each
vertex $v_i$, we associate a state $x_i\in \mathbb{K}$, where $\mathbb{K}$ is a finite field.
A function $f_{v_i}$ updates the state of $v_i$ based on the states of its neighbors and
$x_i$ itself.
Let $\pi=\pi_1\cdots\pi_m$ be a sequence of elements in $V(G)$, in which each $v_i$ appears at least
once, called the \emph{update schedule}.
The states of all vertices are sequentially updated according to their
order in $\pi$ left-to-right, i.e.,
the vertex specified by $\pi_{i+1}$ is updated after the vertex specified by $\pi_i$. Iterating $\pi$
generates the SDS, $(G,f,\pi)$, having $G$ as its \emph{dependency graph}, $f_{v_i}$ ($1\leq i \leq n$)
as \emph{local functions}, and $\pi$ as the \emph{update schedule}.

\begin{remark}
	Alternatively, an SDS can be defined directly via its local functions and update schedule,
	the former inducing the underlying graph by their dependencies.
\end{remark}

The vectors $X=[x_1,\dots, x_n]^T\in \mathbb{K}^n$ are called the \emph{system states} of the SDS.
An update will produce the \emph{phase space} mapping $X_0$ into $X_1=(G,f,\pi)(X_0)$. Understanding the time
evolution of the SDS is tantamount to understanding the mapping $(G, f, \pi): \mathbb{K}^n \rightarrow
\mathbb{K}^n $, i.e.~the forest of directed unicyclic graphs it induces.
In the case of the system map $(G,f,\pi)$ being a linear transformation on $\mathbb{K}^n$, the phase space
has been studied in \cite{BE,tol,laub3,reg} and phase space features were described in terms of the
annihilating polynomials and invariant subspaces of the linear transformation. In~\cite{bill}, linear
SDS over directed graphs were analyzed, allowing for states $x_i$ contained in a finite algebra.

\begin{definition}
	A \emph{linear} {SDS} over $\mathbb{K}$ is an SDS $(G, f, \pi)$ where the local functions are defined by
	\begin{align}
	f_{v_i}: x_i\mapsto a_{i1}x_1+a_{i2}x_2+\dots+a_{in}x_n,
	\end{align}
	where $a_{ij}\in \mathbb{K}$ and $a_{ij}=0$ if $v_i$ and $v_j$ are not adjacent in $G$.
\end{definition}

Note that each local function $f_{v_i}$ induces the function $F_{v_i}$:
$$
F_{v_i}: \mathbb{K}^n \rightarrow \mathbb{K}^n, \quad [x_1,\dots, x_i,\dots, x_n]\mapsto
[x_1,\dots, f_{v_i}(x_i), \dots, x_n],
$$
and $F_{v_i}$ can be expressed as a matrix over $\mathbb{K}$ as follows:
\begin{align*}
F_{v_i}=\left(\begin{array}{ccccc}
1 & 0 &0&\cdots &0\\
0 &1  &0&\cdots &0\\
\vdots & \vdots & \ddots & &\vdots\\
a_{i1}&a_{i2}&a_{i3}&\cdots&a_{in}\\
\vdots & \vdots & \vdots & \ddots&\vdots\\
0&0& 0 &\cdots &1
\end{array}\right), \quad
\text{i.e.,}\ [F_{v_i}]_{kj}=\begin{cases}
1, & \text{if}\ (k=j)\wedge (k\neq i);\\
a_{ij}, & \text{if}\ k=i;\\
0, & \text{otherwise}.
\end{cases}
\end{align*}
Then, if $X_1=(G,f,\pi)(X_0)$, we have
\begin{align}
X_1=F_{\pi_n}F_{\pi_{n-1}}\cdots F_{\pi_1}(X_0).
\end{align}

The first objective in Section~\ref{sec3} will be to derive a closed formula for the composition of these matrices
$F_{\pi_i}$. Secondly, given a linear transformation $A$, we will prove constructively that there exists a linear SDS such that $A=(G,f,\pi)$.

It is well known that an update schedule $\pi$, being a permutation on $V(G)$, induces an \emph{acyclic orientation} $Acyc(G,\pi)$ of the dependency graph
$G$~\cite{rei3,rei4,rei6}, i.e., the edge between two adjacent vertices $v_i$ and $v_j$ will be oriented
from $v_i$ to $v_j$ iff $v_i$ appears after $v_j$ in $\pi$. It is
obvious that
$(G,f,\pi_1)=(G,f,\pi_2)$ if $Acyc(G,\pi_1)=Acyc(G,\pi_2)$.

Any acyclic orientation of $G$ determines a partial order among the $G$-vertices, i.e., a poset.
Let us recall some basic facts about posets~\cite[Chapter~$3$]{stan1}.

A \emph{poset} is a set $P$ with a binary relation `$\leq$' among the elements in $P$, where
the binary relation satisfies reflexivity, antisymmetry and transitivity. The poset will
be denoted by $(P,\leq)$, or $P$ for short.
If two elements $x$ and $y$ satisfy $x\leq y$, we say $x$ and $y$ are \emph{ comparable}.
Furthermore, if $x \leq y$ but $x \neq y$, we write $x < y$.
A \emph{linear extension} of a poset $P$ is an arrangement of the elements in $P$, say
$s_{i_1}s_{i_2}\cdots s_{i_n}$, such that $j\leq k$ if $s_{i_j}\leq s_{i_k}$ in $P$.

\begin{definition}
	Given a poset $(P,\leq)$, a (closed) \emph{interval} $[x,y]$, where $x\leq y$, is the subset
	$\{z\in P: x\leq z \leq y\}$. A \emph{chain} of $P$ is a subset of elements where any
	two elements are comparable.
\end{definition}
In this paper, we only consider posets $(P, \leq)$, where the cardinality $|P|$ is finite and we denote $Int(P)$
to be the set of all intervals of the poset $(P, \leq)$. For a function $h: Int(P)\rightarrow
\mathbb{K}$, we write $h(x,y)$ instead of $h([x,y])$.

\begin{definition}
	The \emph{incidence algebra} $I(P, \mathbb{K})$ of $(P, \leq)$ over $\mathbb{K}$ is the $\mathbb{K}$-algebra
	of all functions
	$
	h: Int(P)\rightarrow \mathbb{K}
	$,
	where multiplication of two functions $h$ and $r$ is defined by
	$$
	(h\cdot r)(x,y):=\sum_{x\leq z \leq y} h(x,z)r(z,y),
	$$
	and $(h\cdot r\cdot g)(x,y)=((h\cdot r)\cdot g)(x,y)$.
\end{definition}

See Stanley~\cite[Section~$3.6,\ 3.7$]{stan1} for more on incidence algebras of posets. 

Let $P=\{s_1,\ldots, s_n\}$. The function $h$ can be expressed as an $n\times n$ matrix $H$ where the rows
(and columns) of $H$ are indexed sequentially by $s_1,\dots, s_n$ and the $(i,j)$-th entry is
$$
[H]_{ij}=
\begin{cases}
h(s_i,s_j), & \text{if}\ s_i\leq s_j; \\
0, & \text{otherwise.}
\end{cases}
$$
It is easy to check that the corresponding matrix of $h\cdot r$ equals the product of the matrices $H$ and $R$, where $R$ is the corresponding matrix
of the function $r$. If $HR=RH=I$, $H$ ($h$) and $R$ ($r$) are inverse to each other, i.e.~we have $H^{-1}=R$
and $R^{-1}=H$. Furthermore, let ${\rm diag}\{a_1, \dots, a_n\}$ denote the $n \times n$ diagonal matrix where the $(i,i)$-th entry is $a_i$. Then, we have the following lemma.
\begin{lemma}\label{2lem1} For $k \geq 1$, we have
	\begin{align}
	[H^k]_{ij} & =\sum_{s_i\leq s_{l_1} \leq s_{l_2} \leq \cdots \leq s_{l_{k-1}} \leq s_j} h(s_i,s_{l_1})h(s_{l_1},s_{l_2})
	\cdots h(s_{l_{k-1}},s_j),\\
	[(H-{\rm Diag}\{H\})^k]_{ij} & =\sum_{s_i< s_{l_1} < s_{l_2} < \cdots < s_{l_{k-1}} < s_j} h(s_i,s_{l_1})h(s_{l_1},s_{l_2})
	\cdots h(s_{l_{k-1}},s_j),
	\end{align}
	where ${\rm Diag}\{H\}={\rm diag}\{[H]_{11},\dots,[H]_{nn}\}$.
\end{lemma}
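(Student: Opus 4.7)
\medskip

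\noindent\textbf{Proof proposal.} The plan is to prove both identities by induction on $k$, exploiting the fact that $[H]_{ab}$ is supported on pairs with $s_a\leq s_b$ (and, for the second identity, on pairs with $s_a<s_b$).

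For the base case $k=1$, both formulas reduce to the very definition of $H$ (respectively of $H-\mathrm{Diag}\{H\}$), where the summation index set consists of the empty sequence of intermediate elements and the sum collapses to a single term $h(s_i,s_j)$ (zero unless $s_i\leq s_j$, or $s_i<s_j$ in the diagonal-removed case).

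For the inductive step, I would write $H^{k+1}=H^{k}\cdot H$ and expand
\[
[H^{k+1}]_{ij}=\sum_{m=1}^{n}[H^{k}]_{im}\,[H]_{mj}.
\]
By the inductive hypothesis, $[H^{k}]_{im}$ equals the stated sum over weak chains $s_i\leq s_{l_1}\leq\cdots\leq s_{l_{k-1}}\leq s_m$, and it vanishes unless $s_i\leq s_m$. Similarly $[H]_{mj}=h(s_m,s_j)$ vanishes unless $s_m\leq s_j$. Restricting the outer sum to those $m$ with $s_i\leq s_m\leq s_j$ and setting $s_{l_k}:=s_m$ gives exactly the claimed sum over $s_i\leq s_{l_1}\leq\cdots\leq s_{l_k}\leq s_j$. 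The argument for $(H-\mathrm{Diag}\{H\})^k$ is verbatim the same, except that the entries $h(s_a,s_b)$ are supported on strict pairs $s_a<s_b$, so every inequality in the resulting chain is strict.

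There is essentially no obstacle; the only mild bookkeeping point is to verify that the reindexing $m\mapsto l_k$ really produces each weak (resp.\ strict) chain of length $k$ exactly once, which is clear because the chain-with-one-extra-link decomposition is a bijection. Thus both identities follow by a single clean induction.
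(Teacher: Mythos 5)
Your proposal is correct: the induction on $k$ via $H^{k+1}=H^k\cdot H$, using that the entries of $H$ (resp.\ $H-\mathrm{Diag}\{H\}$) are supported on pairs $s_a\leq s_b$ (resp.\ $s_a<s_b$), yields exactly the stated chain sums. The paper states this lemma without proof, treating it as the standard fact that powers in the incidence algebra enumerate weighted (multi)chains, and your argument is precisely the canonical one that would be supplied.
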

\proof
The first equation follows from the multiplication rule of matrices and the fact that $[H]_{kl}=0$ if $s_k\nleq s_l$. The second equation follows by further noticing that $[H-{\rm Diag}\{H\}]_{kl}=0$ if $s_k \nless s_l$. \qed

\section{Linear SDS over permutations and M\"{o}bius functions}\label{sec3}

Let $(G,f,\pi)$ be a linear SDS. Then its local functions $f_{v_i}$ can be encoded via the matrix
\begin{align}
A=	\begin{blockarray}{cccccc}
v_1& v_2 & v_3 & \cdots & v_n &\\
\begin{block}{(ccccc)c}
a_{11} & a_{12} &a_{13}&\cdots &a_{1n} & v_1\\
a_{21} &a_{22}  &a_{23}&\cdots &a_{2n} & v_2\\
\vdots & \vdots & \vdots & &\vdots &\vdots \\
a_{i1}&a_{i2}&a_{i3}&\cdots&a_{in} & v_i\\
\vdots & \vdots & \vdots & &\vdots & \vdots\\
a_{n1}&a_{n2}& a_{n3} & \cdots &a_{nn} & v_n\\
\end{block}
\end{blockarray}
\end{align}
and we set $(G,A,\pi)=(G,f,\pi)$. Note that if all vertices update simultaneously, i.e.,~we have a synchronous (or parallel)
dynamical system, the system map is the linear transformation $A$. 

The acyclic orientation $Acyc(G,\pi)$ induces a poset $(P, \leq)$ as follows:
\begin{itemize}
	\item[(a)] $P=V(G)$;
	\item[(b)] $v_i \leq v_j$ if $v_i=v_j$ or there is an edge oriented from $v_i$ to $v_j$;
	\item[(c)] the binary relation $\leq$ on $P$ is the transitive closure w.r.t.~(b).
\end{itemize}
Meanwhile, the local functions $A$ of the SDS induce a function $h_{\pi} \in I(P, \mathbb{K})$:
$$
h_{\pi}: Int(P)\rightarrow \mathbb{K}, \quad [v_i,v_j] \mapsto a_{ij}.
$$
Let $H_{\pi}$ be the matrix of $h_{\pi}$ and $A_{\pi}=H_{\pi}-{\rm Diag}\{H_{\pi}\}=H_{\pi}-{\rm Diag}\{A\}$.
Equivalently, $A_{\pi}$ can be constructed from the matrix $A$ by
\begin{itemize}
	\item[(i)] setting all diagonal entries to $0$, and 
	\item[(ii)] for each pair $a_{ij}$ and $a_{ji}$, $i\neq j$, set one of these equal to $0$. Specifically, set $a_{ij}=0$ if $v_i$ precedes $v_j$ in $\pi$. 
\end{itemize}

\begin{example}\label{eg31}
	Consider the graph $G=C_4$ as shown in Figure~\ref{2fig1} (left).
	\begin{figure}[!htb]
		\centering
		\includegraphics[scale=.75]{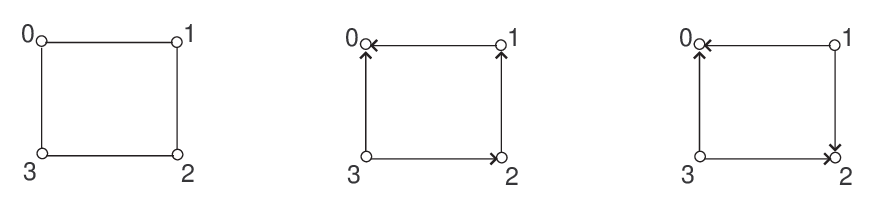}
		\caption{The graph $G=C_4$ (left), the acyclic orientation $Acyc(G,0123)$ (middle), and the acyclic orientation $Acyc(G,0213)$ (right).}\label{2fig1}
	\end{figure}
	
	\noindent Over $\mathbb{F}_2$, let the local functions 
	$$
	f_i: x_i\mapsto x_{i-1}+x_{i}+x_{i+1} \pmod 2,
	$$
	where the indices are taken modulo $4$. Then, we have
	
	\begin{align*}
	A=\left(\begin{array}{cccc}
	1&1&0&1\\
	1&1&1&0\\
	0&1&1&1\\
	1&0&1&1
	\end{array}\right),\quad
	A_{0123}=\left(\begin{array}{cccc}
	0&0&0&0\\
	1&0&0&0\\
	0&1&0&0\\
	1&0&1&0
	\end{array}\right),\quad
	A_{0213}=\left(\begin{array}{cccc}
	0&0&0&0\\
	1&0&1&0\\
	0&0&0&0\\
	1&0&1&0
	\end{array}\right).
	\end{align*}
\end{example}

We are now in position to present our first result:

\begin{theorem}\label{2thm1}
	Let $(G,A,\pi)$ be a linear SDS. Then,
	\begin{align}\label{1t1}
	(G,A,\pi)=  (I-A_{\pi})^{-1}(A-A_{\pi}).
	\end{align}
\end{theorem}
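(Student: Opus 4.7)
I would set $X_1 := F_{\pi_n}\cdots F_{\pi_1}X_0 = (G,A,\pi)X_0$ and derive the identity by unpacking the composition coordinate-by-coordinate.

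For a fixed vertex $v_i$, since $\pi$ is a permutation there is a unique step $t$ with $\pi_t = v_i$. Only the factor $F_{v_i}$ can alter the $i$-th coordinate, so $[X_1]_i$ equals the value of the $i$-th coordinate immediately after step $t$. By the definition of $f_{v_i}$, this value is $\sum_k a_{ik}\,y_k$, where $y_k$ denotes the value of the $k$-th coordinate immediately before step $t$. Since every vertex is updated exactly once, $y_k = [X_1]_k$ if $v_k$ has already been updated at step $t$ (i.e.\ $\mathrm{pos}(v_k) < t$), and $y_k = [X_0]_k$ otherwise (including $k=i$).

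I would next translate ``$v_k$ updated before $v_i$'' into the poset associated with $\mathrm{Acyc}(G,\pi)$. For adjacent $v_i,v_k$, the orientation convention gives an edge $v_i\to v_k$ precisely when $\mathrm{pos}(v_i) > \mathrm{pos}(v_k)$, i.e.\ $v_i < v_k$ in the induced poset; for non-adjacent $v_k$ we have $a_{ik}=0$, so the classification is irrelevant. Substituting gives
\begin{align*}
[X_1]_i \;=\; a_{ii}\,[X_0]_i \;+\; \sum_{\substack{k\neq i\\ v_i<v_k}} a_{ik}\,[X_1]_k \;+\; \sum_{\substack{k\neq i\\ v_k<v_i}} a_{ik}\,[X_0]_k.
\end{align*}
The middle sum is the $i$-th entry of $A_\pi X_1$, because $A_\pi$ collects exactly the $a_{ik}$ with $v_i<v_k$; the remaining terms form the $i$-th entry of $(A-A_\pi)X_0$, since $A-A_\pi$ retains the diagonal and the $a_{ik}$ with $v_k<v_i$, while incomparable pairs contribute zero. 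Assembling over all $i$ yields $X_1 = A_\pi X_1 + (A-A_\pi)X_0$.

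To close the argument I would invert $I-A_\pi$: relabeling vertices by any linear extension of the poset $\mathrm{Acyc}(G,\pi)$ turns $A_\pi$ into a strictly upper triangular matrix, since $[A_\pi]_{ik}\neq 0$ forces $v_i<v_k$. Hence $A_\pi$ is nilpotent and $(I-A_\pi)^{-1} = \sum_{j=0}^{n-1}A_\pi^{\,j}$ exists. Solving gives $X_1 = (I-A_\pi)^{-1}(A-A_\pi)X_0$ for every $X_0$, which is the claimed matrix identity. The main obstacle I expect is the bookkeeping in the translation step: aligning the paper's orientation convention with the strict poset relation and verifying that non-adjacent but transitively comparable pairs contribute nothing to either side; once this is pinned down, the formula drops out of a single sweep through the update schedule.
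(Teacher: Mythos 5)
Your proof is correct, but it takes a genuinely different route from the paper's. The paper expands the product $F_{\pi_n}\cdots F_{\pi_1}=\prod_i(I+\bar F_{\pi_i})$ into a sum over decreasing index subsequences, and then (Claims 1 and 2, via Lemma~\ref{2lem1} on chains in the incidence algebra) identifies the length-$m$ terms with $A_\pi^{m-1}(A-I)$, summing the resulting geometric series and using nilpotency of $A_\pi$ to produce $(I-A_\pi)^{-1}$. You instead derive the implicit linear relation $(I-A_\pi)X_1=(A-A_\pi)X_0$ in one sweep, by observing that at the moment $v_i$ is updated each neighbor's state is either already final (contributing to $A_\pi X_1$, since ``already updated'' translates exactly to $v_i<v_k$ under the orientation convention) or still initial (contributing to $(A-A_\pi)X_0$), and then inverting $I-A_\pi$ by the same nilpotency observation. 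Your bookkeeping is sound: the permutation hypothesis is used exactly where needed (a vertex updated before step $t$ never changes again, so its current value equals $[X_1]_k$), and non-adjacent or incomparable pairs are correctly discharged via $a_{ik}=0$. What your argument buys is brevity and transparency --- it explains \emph{why} $A_\pi$ multiplies $X_1$ and $A-A_\pi$ multiplies $X_0$ without any chain combinatorics. What the paper's expansion buys is generality: it is the template for Theorem~\ref{4thm1} on word update schedules, where a vertex may be updated several times and the clean dichotomy ``initial value versus final value'' that your argument rests on no longer holds; it also exhibits $(I-A_\pi)^{-1}=\sum_j A_\pi^j$ explicitly as a sum over poset chains, which feeds into the Moebius-function applications later in the paper.
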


\begin{proof}
	First, note that $F_{v_i}$ can be decomposed into $F_{v_i}=I+\bar{F}_{v_i}$, where $\bar{F}_{v_i}$ has
	nonzero entries only at the $i$-th row as follows:
	\begin{align*}
	\bar{F}_{v_i}=\left(\begin{array}{ccccc}
	0 & 0 &0&\cdots &0\\
	0 &0  &0&\cdots &0\\
	\vdots & \vdots & \vdots & &\vdots\\
	a_{i1}&\cdots &a_{ii}-1&\cdots&a_{in}\\
	\vdots & \vdots & \vdots & &\vdots\\
	0&0&  0 &\cdots &0
	\end{array}\right).
	\end{align*}
	Accordingly,
	\begin{align}
	(G,A,\pi)=F_{\pi}=F_{\pi_n}F_{\pi_{n-1}}\cdots F_{\pi_1}&=(I+\bar{F}_{\pi_n})(I+\bar{F}_{\pi_{n-1}})\cdots
	(I+\bar{F}_{\pi_1})\nonumber \\ 
	&=I+\sum_{m=1}^{n}\sum_{n\geq k_m>\cdots>k_1\geq 1}\bar{F}_{\pi_{k_m}}\cdots\bar{F}_{\pi_{k_1}}.
	\end{align}
	We shall consider the value of the $(i,j)$-th entry $[F_{\pi}]_{ij}$ of $F_{\pi}$. Let $s=s_1\cdots s_k$
	be a sequence. If $s'=s_{i_1}\cdots s_{i_j}$ is a subsequence of $s$,
	we write $s'\lhd s$.
	
	{\it Claim~$1$.} For $1\leq m,i,j \leq n$, we have
	\begin{align}\label{2thm1eq1}
	\left[\sum_{n\geq k_m>\cdots>k_1\geq 1}\bar{F}_{\pi_{k_m}}\cdots\bar{F}_{\pi_{k_1}}\right]_{ij}
	&=\sum_{v_{l_{m-1}}v_{l_{m-2}}\cdots v_{l_1}v_i \lhd \pi} [\bar{F}_{v_{i}}]_{i l_{1}}[\bar{F}_{v_{l_{1}}}]_{l_{1} l_{2}}\cdots
	[\bar{F}_{v_{l_{m-1}}}]_{l_{m-1} j}.
	\end{align}

	To prove the claim, we observe
	\begin{align*}
	\left[\sum_{n\geq k_m>\cdots>k_1\geq 1}\bar{F}_{\pi_{k_m}}\cdots\bar{F}_{\pi_{k_1}}\right]_{ij}=
	\sum_{n\geq k_m>\cdots>k_1\geq 1} \sum_{\substack{1\leq l_t\leq n\\ 1\leq t\leq m-1}}[\bar{F}_{\pi_{k_m}}]_{il_1}[\bar{F}_{\pi_{k_{m-1}}}]_{l_1l_2}\cdots [\bar{F}_{\pi_{k_2}}]_{l_{m-2}l_{m-1}}[\bar{F}_{\pi_{k_1}}]_{l_{m-1}j}.
	\end{align*}
	Note, if $\pi_{k_m} \neq v_i$, then $[\bar{F}_{\pi_{k_m}}]_{il_1}=0$. Similarly, for $1 \leq t \leq m-1$, if
	$\pi_{k_{m-t}} \neq v_{l_t}$, then $[\bar{F}_{\pi_{k_{m-t}}}]_{l_{t}l_{t+1}}=0$. Thus, the RHS
	can be reduced to
	\begin{align*}
	\sum_{v_{l_{m-1}}v_{l_{m-2}}\dots v_{l_1}v_i \lhd \pi}[\bar{F}_{v_{i}}]_{i l_{1}}[\bar{F}_{v_{l_{1}}}]_{l_{1} l_{2}}\cdots
	[\bar{F}_{v_{l_{m-2}}}]_{l_{m-2} l_{m-1}}[\bar{F}_{v_{l_{m-1}}}]_{l_{m-1} j},
	\end{align*}
	whence Claim~$1$.
	
	{\it Claim~$2$.} For $1\leq m, i, j \leq n$, we have
	\begin{align}\label{2thm1eq2}
	\left[\sum_{n\geq k_m>\cdots>k_1\geq 1}\bar{F}_{\pi_{k_m}}\cdots\bar{F}_{\pi_{k_1}}\right]_{ij}=[A_{\pi}^{m-1}(A-I)]_{ij}.
	\end{align}
	
	Note if $v_i$ and $v_{l_1}$ are adjacent in $G$, $v_i$ appearing after $v_{l_1}$ implies
	$v_i < v_{l_1}$ in the induced poset. Thus, $[\bar{F}_{v_{i}}]_{i l_{1}}$ is either equal to
	$h_{\pi}(v_i,v_{l_1})$ or $0$ depending on $v_i$ and $v_{l_1}$ being adjacent or not.
	The same holds for $[\bar{F}_{v_{l_{t}}}]_{l_t l_{t+1}}$ for all $1\leq t \leq m-2$.
	Therefore, the RHS of eq.~\eqref{2thm1eq1} is
	\begin{align*}
	&\sum_{1\leq l_{m-1} \leq n}\Bigg(\sum_{v_{i}< v_{l_{1}}<\cdots < v_{l_{m-1}}}h_{\pi}(v_{i},v_{l_{1}})h_{\pi}(v_{l_{1}},v_{l_{2}})\cdots h_{\pi}(v_{l_{m-2}},v_{l_{m-1}})\Bigg)
	[\bar{F}_{v_{l_{m-1}}}]_{l_{m-1} j}\\
	= &\sum_{1\leq l_{m-1} \leq n} [A_{\pi}^{m-1}]_{il_{m-1}}[\bar{F}_{v_{l_{m-1}}}]_{l_{m-1} j},
	\end{align*}
	where the last equality follows from Lemma~\ref{2lem1}, namely for fixed $i$ and $l_{m-1}$,
	\begin{align*}
	\sum_{v_{i}< v_{l_{1}}<\cdots < v_{l_{m-1}}}h(v_{i},v_{l_{1}})h(v_{l_{1}},v_{l_{2}})\cdots h(v_{l_{m-2}},v_{l_{m-1}})=[A_{\pi}^{m-1}]_{il_{m-1}}.
	\end{align*}
	Consequently, we have
	\begin{align*}
	\left[\sum_{n\geq k_m>\cdots>k_1\geq 1}\bar{F}_{\pi_{k_m}}\cdots\bar{F}_{\pi_{k_1}}\right]_{ij}
	&=\sum_{1\leq l_{m-1} \leq n}[A_{\pi}^{m-1}]_{il_{m-1}}[\bar{F}_{v_{l_{m-1}}}]_{l_{m-1} j}\\
	&=\sum_{1\leq l_{m-1} \leq n}[A_{\pi}^{m-1}]_{il_{m-1}}[A-I]_{l_{m-1} j}
	=[A_{\pi}^{m-1}(A-I)]_{ij},
	\end{align*}
	completing the proof of Claim~$2$.
	
	Summing over all $1\leq m \leq n$, we obtain
	\begin{align*}
	F_{\pi}&=I+(A-I)+A_{\pi}(A-I)+A_{\pi}^2(A-I)+\cdots+A_{\pi}^{n-1}(A-I)\\
	&=I+(I+A_{\pi}+A_{\pi}^2+\cdots+A_{\pi}^{n-1})(A-I).
	\end{align*}
	Finally, we observe that
	\begin{align}
	(I-A_{\pi})(I+A_{\pi}+A_{\pi}^2+\cdots+A_{\pi}^{n-1})=I-A_{\pi}^n.
	\end{align}
	According to Lemma~\ref{2lem1}, $A_{\pi}^n=0$ since there is no chain
	of size larger than $n$ in a poset with $n$ elements.
	Thus, we obtain $I+A_{\pi}+A_{\pi}^2+\cdots+A_{\pi}^{n-1}=(I-A_{\pi})^{-1}$.
	Hence,
	$$
	F_{\pi}=I+(I-A_{\pi})^{-1}(A-I)=(I-A_{\pi})^{-1}(A-A_{\pi}),
	$$
	completing the proof of the theorem.
\end{proof}

\begin{theorem}\label{3thm2}
	For a finite field $\mathbb{K}$, any synchronous linear dynamical system map $\mathbb{K}^n\rightarrow \mathbb{K}^n$ can be expressed as a linear SDS over $\mathbb{K}$.
\end{theorem}
\proof It is well known that any square matrix $T$ has an $\text{LUP}$ decomposition~\cite{turing,TI}, i.e., there exists a
permutation matrix $P$, a unit lower triangular matrix $L$ (i.e.,~all diagonal entries are $1$ in $L$), and an upper triangular matrix $U$ such that
$PT=LU$. By construction the matrix $PT$ is obtained from $T$ by reordering the rows.
Recall that a synchronous linear dynamical system is a linear system with parallel updating and the system map
is described by a matrix whose rows encode the local functions.
Clearly, different orderings of the local functions (into rows) can be easily converted into each other,
thus has no effect on understanding the system dynamics.
Accordingly, we may assume that the matrix $T$ of a linear synchronous system has w.l.o.g.~the property that $T=LU$.
We further assume the rows and columns of $T$ are indexed by $v_1,\dots, v_n$. 
Since $L$ is unit lower triangular, it is invertible and
its inverse is also a unit lower triangular matrix. As a result, there exists a lower triangular matrix, $A$, having $0$ as
diagonal entries, such that $(I-A)^{-1}=L$.

{\it Claim.} The linear transformation $T$ equals the system map of the linear SDS $(G,U+A,\pi)$, where
\begin{description}
	\item[(a)] the dependency graph $G$ is determined by $U+A$, where if either $[U+A]_{ij}\neq 0$ or  $[U+A]_{ji}\neq 0$ , then
	$v_i$ and $v_j$ are adjacent in $G$, and not adjacent otherwise,
	\item[(b)] the update schedule $\pi=v_1\cdots v_n$.
\end{description}
The particular choice of $\pi$ implies that $(U+A)_{\pi}$ is a lower triangular matrix, i.e.,~any entry above the main diagonal is set to zero.
Since $U$ is upper triangular and $A$ is lower triangular, we have $(U+A)_\pi=A$.
Then, by Theorem~\ref{2thm1} we arrive at
$$
(G,(U+A),\pi)=(I-A)^{-1}((U+A)-A)=LU=T,
$$
completing the proof of the theorem.
\qed

An SDS is called \emph{invertible} if the map $(G,f,\pi): \mathbb{K}^n \rightarrow \mathbb{K}^n$ is invertible. Let $\pi^{[r]}$ denote the reverse order of $\pi$, i.e., $\pi^{[r]}=\pi_n\cdots \pi_2\pi_1$.

\begin{proposition}\label{P:kk}
	Let $(G,A,\pi)$ be a linear SDS and $D=\text{Diag}\{A\}$.
	Then $(G,A,\pi)$ is invertible iff $a_{ii}\neq 0$ for all $1\leq i\leq n$ and
	\begin{align}
	(G,A,\pi)^{-1} = (G,B,\pi^{[r]}),
	\end{align}
	where $B_{\pi^{[r]}}=-D^{-1}A_{\pi^{[r]}}$ and $B_\pi=-D^{-1}A_\pi$ and $Diag\{B\}=D^{-1}$.
\end{proposition}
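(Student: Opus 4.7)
The plan is to work directly with the product decomposition $(G,A,\pi)=F_{\pi_n}F_{\pi_{n-1}}\cdots F_{\pi_1}$ from eq.~(2), analyzing each $F_{v_i}$ on its own instead of invoking the closed form of Theorem~\ref{2thm1}. Each $F_{v_i}$ differs from $I$ only in row $i$, so expanding its determinant along that row (equivalently, clearing row~$i$ by subtracting $a_{ij}$ times the $j$-th standard row for every $j\neq i$) gives $\det F_{v_i}=a_{ii}$. Since $\pi$ is a permutation, $\det(G,A,\pi)=\prod_{i=1}^{n}a_{ii}$, which immediately yields the ``iff $a_{ii}\neq 0$ for all $i$'' invertibility criterion.

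Assuming all $a_{ii}\neq 0$, I would next compute $F_{v_i}^{-1}$ explicitly. Solving $F_{v_i}Y=X$ gives $y_k=x_k$ for $k\neq i$ together with $y_i=a_{ii}^{-1}\bigl(x_i-\sum_{j\neq i}a_{ij}x_j\bigr)$, so $F_{v_i}^{-1}$ is again the identity outside row~$i$, with row~$i$ given by $(-a_{i1}/a_{ii},\ldots,1/a_{ii},\ldots,-a_{in}/a_{ii})$. Declaring $b_{ii}=a_{ii}^{-1}$ and $b_{ij}=-a_{ij}/a_{ii}$ for $i\neq j$, the matrix $F_{v_i}^{-1}$ is precisely the local-function matrix at $v_i$ built from $B$; moreover $b_{ij}$ vanishes iff $a_{ij}$ does for $i\neq j$, so $B$ has $G$ as its dependency graph too.

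Reversing the product then produces
\[
(G,A,\pi)^{-1}=F_{\pi_1}^{-1}F_{\pi_2}^{-1}\cdots F_{\pi_n}^{-1}=(G,B,\pi^{[r]}),
\]
the second equality being eq.~(2) applied to the schedule $\pi^{[r]}=\pi_n\pi_{n-1}\cdots\pi_1$, whose first updated vertex is $\pi_n$ and last is $\pi_1$. It then remains to read off the three identities on $B$ from its entrywise formula: $\mathrm{Diag}\{B\}=D^{-1}$ is immediate; and because the $\pi^{[r]}$-poset is the reverse of the $\pi$-poset, $[A_{\pi^{[r]}}]_{ij}=a_{ij}$ exactly when $v_i<v_j$ in the $\pi^{[r]}$-poset, so $[-D^{-1}A_{\pi^{[r]}}]_{ij}=-a_{ij}/a_{ii}=b_{ij}$ under the same condition, which is precisely $B_{\pi^{[r]}}$; the dual identity $B_\pi=-D^{-1}A_\pi$ follows the same way with $\pi$ in place of $\pi^{[r]}$. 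The only real pitfall is keeping the convention ``first updated vertex sits at the rightmost position in the matrix product'' consistent when passing between $\pi$ and $\pi^{[r]}$; modulo that bookkeeping, everything reduces to the single row-$i$ inversion carried out above.
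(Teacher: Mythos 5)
Your proof is correct, but it takes a genuinely different route from the paper's. The paper proves the identity $(G,A,\pi)^{-1}=(G,B,\pi^{[r]})$ by invoking the closed form of Theorem~\ref{2thm1} twice: it writes the matrix of $(G,B,\pi^{[r]})$ as $(I+D^{-1}A_{\pi^{[r]}})^{-1}D^{-1}(I-A_{\pi})$ and recognizes this as $[(I-A_{\pi})^{-1}(D+A_{\pi^{[r]}})]^{-1}$, the inverse of the closed-form matrix of $(G,A,\pi)$ given in the Remark; invertibility is dispatched in one line by noting each $F_{v_i}$ is invertible. You instead never touch Theorem~\ref{2thm1}: you compute $\det F_{v_i}=a_{ii}$ (which makes the ``only if'' direction of the invertibility criterion cleaner and more explicit than the paper's one-liner), invert each local matrix $F_{v_i}$ in place, observe that $F_{v_i}^{-1}$ is again a local update matrix with entries $b_{ii}=a_{ii}^{-1}$, $b_{ij}=-a_{ij}/a_{ii}$, and reverse the product to land on $(G,B,\pi^{[r]})$ directly from eq.~(2). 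Your argument is more elementary and more explanatory --- it shows \emph{why} the reversed schedule appears (inverting a composition reverses the order of the factors) and exhibits $B$ constructively rather than verifying it post hoc; the paper's argument is shorter given the machinery already established and stays entirely within the incidence-algebra formalism that the rest of the section builds on. Your concluding check that the entrywise definition of $B$ reproduces the three stated identities $\mathrm{Diag}\{B\}=D^{-1}$, $B_{\pi}=-D^{-1}A_{\pi}$, $B_{\pi^{[r]}}=-D^{-1}A_{\pi^{[r]}}$ is the right way to close the loop, since those three conditions determine $B$ completely.
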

\proof
Invertibility follows from the invertibility of the maps $F_{v_i}$ which is easily checked when considering their
matrix representation.
The matrix of the linear SDS $(G,B,\pi^{[r]})$ is
given by
\begin{eqnarray*}
	(I-(-D^{-1}A)_{\pi^{[r]}})^{-1}(D^{-1}+(-D^{-1}A)_{\pi}) & = & (I+D^{-1}A_{\pi^{[r]}})^{-1}D^{-1} (I-A_{\pi}) \\
	& = & [(I-A_{\pi})^{-1}(D+A_{\pi^{[r]}})]^{-1},
\end{eqnarray*}
which is the inverse of $(G,A,\pi)$.
\qed

\begin{remark}
	In view of $(I-A_{\pi})^{-1}(A-A_{\pi})=(I-A_{\pi})^{-1}({\rm Diag}\{A\}+A_{\pi^{[r]}})$,
	$(I-A_{\pi})^{-1}\in I(P,\mathbb{K})$ and ${\rm Diag}(A)+A_{\pi^{[r]}}$ can be considered as a function
	in the incidence algebra over the poset having the reverse order.
\end{remark}

\begin{proposition}\label{P:moeb}
	Let $P$ be a poset over $\{s_1,\dots,s_n\}$.
	For any $H\in I(P,\mathbb{K})$ such that $H_{ii}=1$ for
	$1\leq i \leq n$, there exists a linear SDS whose matrix equals $H^{-1}$.
\end{proposition}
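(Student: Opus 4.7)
The plan is to construct an explicit linear SDS realising $H^{-1}$, so that the connection to $H$ itself (and in the special case $H=\zeta$, to the M\"obius function of $P$) is transparent. First I would relabel, if necessary, so that $s_1, s_2, \dots, s_n$ is a linear extension of $P$, i.e.\ $s_i \leq s_j$ in $P$ implies $i \leq j$. In this labelling every element of $I(P,\mathbb{K})$ is represented by an upper triangular matrix; since $H_{ii}=1$, the matrix $H$ is invertible and $H^{-1}$ again lies in $I(P,\mathbb{K})$.

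Next I would take the dependency graph $G$ to be the comparability graph of $P$ and choose $\pi = s_n s_{n-1} \cdots s_1$, the \emph{reverse} of the chosen linear extension. A short check using the convention of Section~3 (an edge is oriented from $s_i$ to $s_j$ iff $s_i$ appears after $s_j$ in $\pi$) shows that $\mathrm{Acyc}(G,\pi)$ has a directed edge $s_i \to s_j$ exactly when $i<j$ and $s_i,s_j$ are comparable in $P$; by the linear extension this is the same as $s_i < s_j$ in $P$. Taking transitive closure recovers $\leq$, so the induced poset of $\mathrm{Acyc}(G,\pi)$ is $P$ itself. The only delicate step in the whole argument is this matching of conventions: the linear extension must be \emph{reversed} inside $\pi$, otherwise one would recover the opposite poset.

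With this setup in place I would define the matrix of local functions by $A := 2I - H$. This is a valid choice: $A_{ii}=1$, and for $i\neq j$ one has $A_{ij} = -H_{ij}$, which is automatically zero whenever $s_i$ and $s_j$ are incomparable in $P$ (hence non-adjacent in $G$). Since $A$ is supported on the $\leq$-relations of the induced poset $P$, the matrix $H_{\pi}$ produced by Theorem~\ref{2thm1} coincides with $A$, so that $A_\pi = A - \mathrm{Diag}(A) = I - H$. Consequently $I - A_\pi = H$ and $A - A_\pi = I$, and substituting into Theorem~\ref{2thm1} gives
$$
(G,A,\pi) \;=\; (I - A_\pi)^{-1}(A - A_\pi) \;=\; H^{-1},
$$
as desired. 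Once the orientation conventions have been pinned down, the choice $A = 2I - H$ is forced by the requirements $A - A_\pi = I$ and $I - A_\pi = H$, so the construction reduces to the single bookkeeping verification described in the second paragraph.
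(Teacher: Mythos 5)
Your construction is exactly the paper's: both take $G$ to be the comparability graph of $P$, the local functions $A=2I-H$ (the paper writes this entrywise as $a_{ii}=1$, $a_{ij}=-h(s_i,s_j)$ for $s_i<s_j$, and $0$ otherwise), and $\pi$ the reverse of a linear extension, then apply Theorem~\ref{2thm1} with $I-A_\pi=H$ and $A-A_\pi=I$. The initial relabelling and the explicit check of the orientation convention are harmless additions; the argument is correct.
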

\proof Given $P$ and $H\in I(P,\mathbb{K})$, we construct an SDS as follows:
\begin{description}
	\item[Step~$1$.] Define a graph $G$ having $\{s_1,\dots,s_n\}$ as vertices and an $G$-edge is
	placed connecting $s_i$ and $s_j$ iff they are comparable in $P$.
	\item[Step~$2$.] Let $x_i$ denote the state of the $G$-vertex $s_i$. Define
	$$
	f_{s_i}: x_i \mapsto a_{i1}x_1+a_{i2}x_2+\cdots + a_{in}x_n,
	$$
	where $a_{ii}=1$, $a_{ij}=-H_{ij}=-h(s_i,s_j)$ if $s_i < s_j$ in $P$, set $a_{ij}=0$ otherwise.
	\item[Step~$3$.] Choose the update schedule $\pi=\pi_1\cdots \pi_n$ such that $\pi^{[r]}$ is a
	linear extension of $(P,\leq)$.
\end{description}

{\it Claim.} The system map of the linear SDS $(G,A,\pi)$ equals $H^{-1}$.

To prove the claim, we observe $I-A_{\pi}=H$ and $A_{\pi^{[r]}}=0$, and compute via Theorem~\ref{2thm1}
$$
(G,A,\pi)=(I-A_{\pi})^{-1}(\text{Diag}\{A\}+A_{\pi^{[r]}})=H^{-1}I=H^{-1}.
$$
This proves the claim and whence the proposition. \qed

\begin{remark}
	The condition $H_{ii}\neq 0$ above is necessary, otherwise $H$ is not invertible as $H$ is `essentially' a triangular matrix.
	A particularly interesting function in $I(P,\mathbb{K})$ is the M\"{o}bius function $\mu$
	which is the inverse of the zeta function $\zeta \in I(P, \mathbb{K})$, $\zeta(x,y)=1$ for any
	$x\leq y$ in $P$. We can compute the M\"{o}bius function $\mu$ of any poset by
	Proposition~\ref{P:moeb} via a linear SDS.
\end{remark}

\begin{example}
	We compute the M\"{o}bius function of the poset $P$ displayed in Figure~\ref{2fig2} (a).
	\begin{figure}[!htb]
		\centering
		\includegraphics[scale=.75]{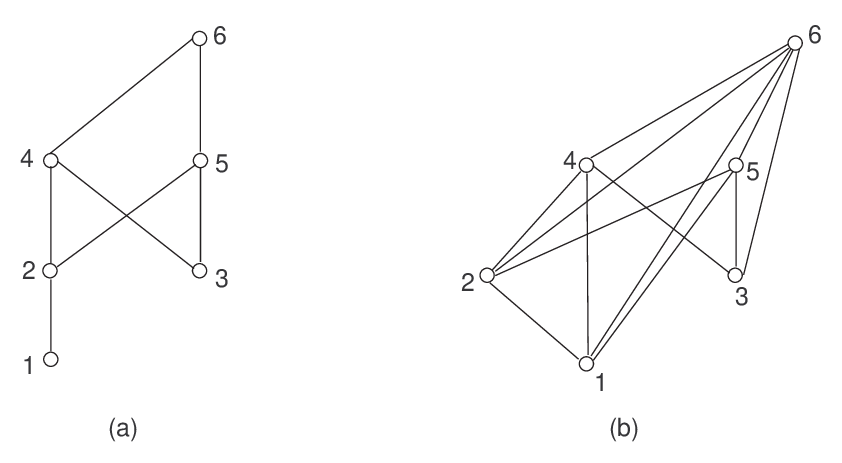}
		\caption{(a). The Hasse diagram of a poset $P$ with $6$ elements; (b). The underlying graph of the constructed SDS for $P$.}\label{2fig2}
	\end{figure}
	According to Proposition~\ref{P:moeb}, if we start with $H$ corresponding to the zeta function of $P$, then the system map of the constructed SDS equals the M\"{o}bius function $U=H^{-1}$ of $P$. The underlying graph of the SDS is illustrated in Figure~\ref{2fig2} (b).
	The local functions are encoded via the matrix $A$ and we eventually obtain $U$ as follows:

	\begin{align*}
	A=	\begin{blockarray}{ccccccc}
	1& 2 & 3 & 4& 5 & 6&\\
	\begin{block}{(cccccc)c}
	1 & -1 & 0 & -1 & -1 & -1 & 1\\
	0 & 1  & 0 & -1 & -1 & -1 & 2\\
	0 & 0  & 1 & -1 & -1 & -1 & 3\\
	0 & 0  & 0 &  1 & 0  & -1 & 4\\
	0 & 0  & 0 &  0 & 1 & -1 & 5\\
	0 & 0  & 0 &  0 & 0 & 1 & 6\\
	\end{block}
	\end{blockarray}, \qquad
	U=	\begin{blockarray}{ccccccc}
	1& 2 & 3 & 4& 5 & 6&\\
	\begin{block}{(cccccc)c}
	1 & -1 & 0 &  0 & 0 & 0 & 1\\
	0 & 1  & 0 & -1 & -1 & 1 & 2\\
	0 & 0  & 1 & -1 & -1 & 1 & 3\\
	0 & 0  & 0 &  1 & 0  & -1 & 4\\
	0 & 0  & 0 &  0 & 1 & -1 & 5\\
	0 & 0  & 0 &  0 & 0 & 1 & 6\\
	\end{block}
	\end{blockarray}
	\end{align*}
	The update schedule $\pi=654321$ is the reverse of the linear extension $123456$ of $P$.
	
	Since $U=(G,A,\pi)$ and $UI=U$, the $j$-th column of $U$ is the system state updated from the system state specified by the $j$-th column of $I$.
	For example, to compute the $4$-th column of $U$, we start with the system state $[0,0,0,1,0,0]$, and updating the vertices sequentially (following the $\pi$-order), we obtain $[0,-1,-1,1,0,0]$. In order to compute the $6$-th column of $U$, we start with the system state $[0,0,0,0,0,1]$ and update the vertices to obtain $[0,1,1,-1,-1,1]$. 
\end{example}

\section{Linear SDS over words and a cut theorem of posets}\label{sec4}

Suppose $\omega=w_1\cdots w_m$ is a word of $G$-vertices, where $v_i$ appears $|\omega|_{v_i}=m_i\geq 1$ times.
Clearly, we have $m=\sum_i m_i \geq n$.
Note that two adjacent vertices $v_i$ and $v_j$ may appear in $\omega$ in the pattern $v_i,\dots, v_j, \dots, v_i,\dots$.
This prevents us from directly working on a poset determined by some acyclic orientation of $G$.
In order to remedy this we proceed as follows:
\begin{definition}
	Given a matrix $T_{n\times n}=[t_{ij}]$ and a tuple $s=(m_1,\dots,m_n)$, where $\sum_k m_k=m$ and $m_k\geq 1$.
	The \emph{block-expansion} of $(T,s)$ is the matrix $^e(T,s)$ obtained from $T$ by replacing the entry $t_{ij}$ for
	$1\leq i,j\leq n$ by an $m_i\times m_j$ matrix with all entries being equal to $t_{ij}$.
\end{definition}

\begin{definition}
	Given a matrix $T_{m\times m}=[t_{ij}]$ and a tuple $s=(m_1,\dots,m_n)$, where $\sum_k m_k=m$ and $m_k\geq 1$.
	The \emph{block-compression} of $(T,s)$ is the matrix $^c(T,s)$ obtained from $T$ by replacing the
	block in $T$ bounded by the $(1+\sum_{k<r}m_{k})$-th row, the $(\sum_{k\leq r }m_k)$-th row, the
	$(1+\sum_{k<l}m_k)$-th column and the $(\sum_{k\leq l}m_k)$-th column, by the sum over all the entries in
	the block for all $1\leq r,l\leq n$, where $m_0$ is assumed to be $0$.
\end{definition}

\begin{example}
	Let $s=(2,2)$.
	Then,
	\begin{align*}
	T=\left(\begin{array}{cc}
	1&2\\
	3&4
	\end{array}\right)\qquad
	&\Longrightarrow \quad
	^e(T,s)=\left(\begin{array}{cc:cc}
	1&1&2&2\\
	1&1&2&2\\ \hdashline
	3&3&4&4\\
	3&3&4&4
	\end{array}\right),\\
	T=\left(\begin{array}{cc:cc}
	1&1&3&2\\
	3&3&2&2\\ \hdashline
	5&3&4&4\\
	3&4&5&4
	\end{array}\right)\qquad
	&\Longrightarrow \quad
	^c(T, s)=\left(\begin{array}{cc}
	8&9\\
	15&17
	\end{array}\right).
	\end{align*}
\end{example}

\begin{theorem}\label{4thm1}
	Let $(G,A,\omega)$ be a linear SDS over a word $\omega=w_1\cdots w_m$ where $|\omega|_{v_i}=m_i \geq 1$.
	Let a tuple $s=(m_1,\dots, m_n)$ and
	$B=~^e(A-I,s)$ be a matrix with both rows and columns indexed by $u_1,\ldots, u_m$,
	and let $\bar{\omega}=\bar{w}_1 \cdots
	\bar{w}_m$ be a sequence on $u_i$'s, obtained by substituting the $k$-th appearance of $v_i$ in $\omega$ by
	$u_{\sum_{j=1}^{i-1} m_j+k}$. Then,
	\begin{align}\label{4t1}
	(G,A,\omega)=I+~^c((I-B_{\bar{\omega}})^{-1},s)(A-I).
	\end{align}
\end{theorem}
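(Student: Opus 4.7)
The plan is to run the proof of Theorem~\ref{2thm1} after lifting the word $\omega$ to the \emph{permutation} $\bar\omega$ on the expanded alphabet $u_1,\ldots,u_m$, and then to project the resulting identity back to $n\times n$ matrices via block-compression. As in Theorem~\ref{2thm1} I first decompose $F_{w_k}=I+\bar F_{w_k}$, where $\bar F_{w_k}$ vanishes off its $w_k$-th row and carries the row $[A-I]_{w_k,\,\cdot}$, and then expand
\[
F_\omega=I+\sum_{p\geq 1}\sum_{m\geq k_p>\cdots>k_1\geq 1}\bar F_{w_{k_p}}\cdots \bar F_{w_{k_1}}.
\]
The same row-support argument that proved Claim~$1$ of Theorem~\ref{2thm1} shows that the $(i,j)$-entry of the inner sum equals a sum over position tuples $(k_1<\cdots<k_p)$ with $w_{k_p}=v_i$ of the product $[A-I]_{w_{k_p},w_{k_{p-1}}}\cdots [A-I]_{w_{k_2},w_{k_1}}\cdot [A-I]_{w_{k_1},j}$.

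Now the relabeling enters: each position $k$ of $\omega$ carries a unique label $u_{r_k}=\bar w_k$, and its block index $\beta(r_k)$ is the original vertex $w_k$. Since $[B]_{r,s}=[A-I]_{\beta(r),\beta(s)}$ by definition of block-expansion, and since $[B_{\bar\omega}]_{r,s}=[B]_{r,s}$ precisely when $u_r$ follows $u_s$ in $\bar\omega$ (equivalently, when the corresponding positions form one step of a decreasing chain), the entry-wise identity rewrites as
\[
\Bigl[\sum_{k_p>\cdots>k_1}\bar F_{w_{k_p}}\cdots\bar F_{w_{k_1}}\Bigr]_{ij}
=\sum_{r_p:\,\beta(r_p)=i}\;\sum_{r_1\in[m]}[B_{\bar\omega}^{\,p-1}]_{r_p,r_1}\,[A-I]_{\beta(r_1),j}.
\]
Grouping $r_p$ by the block $i$ and $r_1$ by the block $l$ is exactly the definition of ${}^c(\,\cdot\,,s)$, so the right-hand side equals $\sum_l{}^c(B_{\bar\omega}^{\,p-1},s)_{il}\,[A-I]_{lj}$. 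Summing over $p\geq 1$, invoking $\mathbb{K}$-linearity of block-compression together with the nilpotency of $B_{\bar\omega}$ (it is strictly triangular in the order induced by $\bar\omega$, so $B_{\bar\omega}^m=0$), the finite geometric series $\sum_{q=0}^{m-1}B_{\bar\omega}^{\,q}=(I-B_{\bar\omega})^{-1}$ converts the identity into \eqref{4t1}.

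The main obstacle is the combinatorial bijection at the heart of the lift: verifying that every chain subsequence contributing to $\bigl[\sum\bar F_{w_{k_p}}\cdots\bar F_{w_{k_1}}\bigr]_{ij}$ corresponds to a unique position chain in the permutation $\bar\omega$ with matching block projection under $\beta$, and conversely, so that the inner sums over $l_1,\ldots,l_{p-1}$ in the word picture match term-for-term the chain sums computing $B_{\bar\omega}^{\,p-1}$ on the expanded alphabet. Once this bijection is secure, the remaining ingredients---$\mathbb{K}$-linearity of ${}^c(\,\cdot\,,s)$, nilpotency of $B_{\bar\omega}$, and the identification $[B]_{r,s}=[A-I]_{\beta(r),\beta(s)}$---are routine bookkeeping, and specializing to the case where $\omega$ is itself a permutation (so $s=(1,\ldots,1)$, $B=A-I$, and ${}^c=\mathrm{id}$) recovers Theorem~\ref{2thm1}, providing a consistency check.
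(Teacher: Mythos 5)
Your proposal is correct and follows essentially the same route as the paper's proof: the decomposition $F_{w_k}=I+\bar F_{w_k}$, the row-support reduction (the paper's Claim~1), the lift of position chains in $\omega$ to chains of the permutation $\bar\omega$ on the expanded alphabet with $[B]_{r,s}=[A-I]_{\beta(r),\beta(s)}$ (the paper's Claim~2, where the block-grouping of the endpoints yields exactly $^c(B_{\bar\omega}^{p-1},s)$), and finally nilpotency of $B_{\bar\omega}$ plus the geometric series. The bijection you flag as the main obstacle is precisely the paper's Claim~2, and your displayed identity already contains the correct justification, so no gap remains.
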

\proof First we have
\begin{align*}
(G,A,\omega)=F_{w_m}F_{w_{m-1}}\cdots F_{w_1}&=(I+\bar{F}_{w_m})(I+\bar{F}_{w_{m-1}})\cdots (I+\bar{F}_{w_1})\\
&=I+\sum_{l=1}^{m}\sum_{m\geq k_l>\cdots>k_1\geq 1}\bar{F}_{w_{k_l}}\cdots\bar{F}_{w_{k_1}}.
\end{align*}
Let $s=s_1\cdots s_k$ be a sequence. If $s'=s_{i1}\cdots s_{ij}$ is a subsequence of $s$,
we write $s'\lhd s$.

{\it Claim~$1$.} For $l \geq 1$, $n\geq i,j \geq 1$,
\begin{align}\label{eq31}
\Big[ \sum_{m\geq k_l>\cdots>k_1\geq 1}\bar{F}_{w_{k_l}}\cdots\bar{F}_{w_{k_1}} \Big]_{ij}=
\sum_{v_{d_{1}}\dots v_{d_{l-1}}v_i \lhd \omega} [\bar{F}_{v_i}]_{i d_{l-1}}
[\bar{F}_{v_{d_{l-1}}}]_{d_{l-1} d_{l-2}} \cdots [\bar{F}_{v_{d_1}}]_{d_1 j},
\end{align}
where the summation on the RHS is over all length-$l$ subsequences ending with $v_i$ in $\omega$ (note here and
in the following, a subsequence may appear multiple times due to the multiplicity of vertices in $\omega$), and
interpreted as $\sum_{v_i \in  \omega} [\bar{F}_{v_i}]_{ij}$, i.e.~summation over all occurrences of $v_i$,
for $l=1$.
The LHS of eq.~\eqref{eq31} is formally equal to a summation over all subsequences of length $l$.
It reduces to the RHS by the argument given by Claim~$1$ in Theorem~\ref{2thm1}, the difference being that
some vertices may appear multiple times in a subsequence.

{\it Claim~$2$.} For $l \geq 1$, $n\geq i,k \geq 1$,
\begin{align}\label{eq32}
\sum_{v_k v_{j_{1}}\dots v_{j_{l-1}}v_i \lhd \omega}
[\bar{F}_{v_i}]_{i j_{l-1}} [\bar{F}_{v_{j_{l-1}}}]_{j_{l-1} j_{l-2}} \cdots [\bar{F}_{v_{j_1}}]_{j_1 k}
=[~^c(B_{\bar{\omega}}^{l},s)]_{ik}.
\end{align}
Note that the summation of the LHS of eq.~\eqref{eq32} is over all length-$(l+1)$ subsequences starting with
$v_k$ and ending with $v_i$ in $\omega$. Fixing an occurrence of $v_i$ and $v_k$ in $\omega$ (e.g., the leftmost pair
of $v_i$ and $v_k$ in the following illustration),
\begin{align*}
\vcenter{\xymatrix@C=0pc@R=1pc{
		v\text{-sequence}~\omega: & &  w_1 & w_{2}& \cdots &v_k\ar@{->}[d]&\cdots & v_i\ar@{->}[d]&\cdots & v_k &\cdots &
		v_i & \cdots & w_{m-1} & w_m \\
		u\text{-sequence}~\bar{\omega}: && \bar{w}_1 & \bar{w}_{2}& \cdots &u_q &\cdots & u_p&\cdots &
		u_{\bar{q}} &\cdots & u_{\bar{p}} & \cdots & \bar{w}_{m-1} & \bar{w}_m
	}}
	\end{align*}
	we compute the summation over all length-$(l+1)$ subsequences
	starting with this $v_k$ and ending with this $v_i$:
	fixing a subsequence $v_k v_{j_1} \dots v_{j_{l-1}} v_i$ in $\omega$, this
	subsequence corresponds to a subsequence $u_q u_{j_1'}\dots u_{j_{l-1}'} u_p$ in $\bar{\omega}$. Let $R_{u_i}$ be the matrix
	induced by the row indexed by $u_i$ in $B$, in analogy to the construction of $F_{v_i}$ induced by the $i$-th
	row of $A$. Let $R_{u_i}=I+\bar{R}_{u_i}$. Then, by construction,	
	\begin{align*}
	[\bar{F}_{v_i}]_{i j_{l-1}} [\bar{F}_{v_{j_{l-1}}}]_{j_{l-1} j_{l-2}} \cdots [\bar{F}_{v_{j_1}}]_{j_1 k}
	&=[\bar{R}_{u_p}]_{p j'_{l-1}}[\bar{R}_{u_{j'_{l-1}}}]_{j'_{l-1} j'_{l-2}} \cdots [\bar{R}_{u_{j'_1}}]_{j'_1 q}.
	\end{align*}
	Thus, the summation over all length-$(l+1)$ subsequences starting with this $v_k$ and ending with this $v_i$ is
	tantamount to summing over all length-$(l+1)$ subsequences starting with $u_q$ and ending with $u_p$ in $\bar{\omega}$,
	that is,
	\begin{align}\label{4eq16}
	&\sum_{\substack{v_kv_{j_1}\dots v_{j_{l-1}v_i \lhd \omega}\\ v_k\rightarrow u_q,v_i\rightarrow u_p}}
	[\bar{F}_{v_i}]_{i j_{l-1}} [\bar{F}_{v_{j_{l-1}}}]_{j_{l-1} j_{l-2}} \cdots [\bar{F}_{v_{j_1}}]_{j_1 k}\nonumber\\
	=&\sum_{u_q u_{j'_1}\dots u_{j'_{l-1}} u_p \lhd \bar{\omega}} [\bar{R}_{u_p}]_{p j'_{l-1}}[\bar{R}_{u_{j'_{l-1}}}]_{j'_{l-1} j'_{l-2}}
	\cdots [\bar{R}_{u_{j'_1}}]_{j'_1 q}
	= [B_{\bar{\omega}}^{l}]_{pq},
	\end{align}
	where the last equation follows from Lemma~\ref{2lem1}, in view of the fact
	that $\bar{\omega}$ induces a partial order on $u$-elements such that $u_p< u_{j_{l-1}'}<u_{j_{l-2}'}<\cdots < u_{j'_1} <u_q$.

	To prove Claim~$2$, we sum over all pairs of $v_i,v_k$ such that $v_i$ appears after $v_k$ in $\omega$ on the
	LHS of eq.~\eqref{4eq16},
	which implies summing over all $p,q$ such that $u_p$ is a substitution of $v_i$, $u_q$ is a substitution of $v_k$,
	and $u_p$ appears after $u_q$ in $\bar{\omega}$ on the RHS of eq.~\eqref{4eq16}.
	Note if $p,q$ are such a pair for the latter, then $[B_{\bar{\omega}}^{l}]_{qp}=0$ as there is no chain (of length $l+1$)
	directed from
	$u_q$ to $u_p$ in the partial order induced by $\bar{\omega}$, thus the latter equals to
	summing over all $p,q$ such that $u_p$ is a substitution of $v_i$ and $u_q$ is a substitution of $v_k$ in $\bar{\omega}$.
	By construction, all of the $m_i$ $u$-elements corresponding to $v_i$ and $m_k$ $u$-elements corresponding to $v_k$, respectively, are consecutively
	arranged in $B$ (thus $B_{\bar{\omega}}^l$),
	whence the summation over all $p,q$ means to sum over an $m_i\times m_k$
	block in $B_{\bar{\omega}}^l$. Accordingly, we arrive at
	$$
	\sum_{\substack{1\leq p,\; q\leq m\\
			v_p, \; \text{\rm a subst. of $v_i$}\\
			v_q, \; \text{\rm a subst. of $v_k$}}}
	[B_{\bar{\omega}}^{l}]_{pq}=[~^c(B_{\bar{\omega}}^{l},s)]_{ik},
	$$
	and Claim~$2$ is proved.

	Applying Claim~$1$ and Claim~$2$, we compute
	\begin{align*}
	&\quad \Big[ \sum_{m\geq k_l>\cdots>k_1\geq 1}\bar{F}_{w_{k_l}}\cdots\bar{F}_{w_{k_1}} \Big]_{ij}
	=\sum_{v_{d_{1}}\dots v_{d_{l-1}}v_i \lhd \omega} [\bar{F}_{v_i}]_{i d_{l-1}}
	[\bar{F}_{v_{d_{l-1}}}]_{d_{l-1} d_{l-2}} \cdots [\bar{F}_{v_{d_1}}]_{d_1 j}\\
	& =\sum_{d_1=1}^n \Bigg( \sum_{v_{d_1}\dots v_{d_{l-1}}v_i \lhd \omega}[\bar{F}_{v_i}]_{i d_{l-1}}
	[\bar{F}_{v_{d_{l-1}}}]_{d_{l-1} d_{l-2}} \cdots [\bar{F}_{v_{d_{2}}}]_{d_{2} d_{1}} \Bigg) [\bar{F}_{v_{d_1}}]_{d_1 j}
	=\sum_{d_1=1}^n [~^c(B_{\bar{\omega}}^{l-1}, s)]_{i d_1} [A-I]_{d_1 j}\\
	& =[~^c(B_{\bar{\omega}}^{l-1}, s)(A-I)]_{ij}.
	\end{align*}
	Therefore,
	\begin{align*}
	(G,A,\omega)=I+~^c(B_{\bar{\omega}}^{0}, s)(A-I)+~^c(B_{\bar{\omega}}^{1}, s)(A-I)+\cdots +~^c(B_{\bar{\omega}}^{m-1}, s)(A-I).
	\end{align*}
	In view of $^c(A,s)+~^c\/(B,s)=~^c(A+B, s)$, we observe
	$$
	^c(B_{\bar{\omega}}^{0}, s)+~^c(B_{\bar{\omega}}^{1}, s)+\cdots +~^c(B_{\bar{\omega}}^{m-1}, s)
	=~^c(B_{\bar{\omega}}^{0}+B_{\bar{\omega}}^{1}+\cdots+B_{\bar{\omega}}^{m-1},s).
	$$
	Since $B_{\bar{\omega}}^{0}+B_{\bar{\omega}}^{1}+\cdots+B_{\bar{\omega}}^{m-1}=(I-B_{\bar{\omega}})^{-1}$ we obtain
	$$
	(G,A,\omega)=I+~^c((I-B_{\bar{\omega}})^{-1},s)(A-I),
	$$
	completing the proof of the theorem.\qed

	\begin{example}
		Consider the linear SDS $(G,f,\omega)$ where $G,\ f$ are as defined in Example~\ref{eg31} with update schedule $\omega=013120321$. Following the construction,
		$\bar{\omega}=u_1u_3u_8u_4u_6u_2u_9u_7u_5$ and
		$s=(2,3,2,2)$,
		\begin{align*}
		A=\left(\begin{array}{cccc}
		1&1&0&1\\
		1&1&1&0\\
		0&1&1&1\\
		1&0&1&1
		\end{array}\right),\quad	
		B=~^e(A-I,s)=\left(\begin{array}{cc:ccc:cc:cc}
		0& 0 & 1 &1 &1 &0&0&1&1\\
		0& 0 & 1 &1 &1 &0&0&1&1\\ \hdashline
		1& 1 & 0 &0 &0 &1&1&0&0\\
		1& 1 & 0 &0 &0 &1&1&0&0\\
		1& 1 & 0 &0 &0 &1&1&0&0\\ \hdashline
		0& 0 & 1& 1 & 1 &0 &0 &1&1\\
		0& 0 & 1& 1 & 1 &0 &0 &1&1\\ \hdashline
		1& 1 & 0&0 &0 & 1& 1 & 0 &0\\
		1& 1 & 0&0 &0 & 1& 1 & 0 &0
		\end{array}\right),\quad
		\end{align*}
		\begin{align*}
		B_{\bar{\omega}}=\left(\begin{array}{cc:ccc:cc:cc}
		0& 0 & 0 &0 &0 &0&0&0&0\\
		0& 0 & 1 &1 &0 &0&0&1&0\\ \hdashline
		1& 0 & 0 &0 &0 &0&0&0&0\\
		1& 0 & 0 &0 &0 &0&0&0&0\\
		1& 1 & 0 &0 &0 &1&1&0&0\\ \hdashline
		0& 0 & 1& 1 & 0 &0 &0 &1&0\\
		0& 0 & 1& 1 & 0 &0 &0 &1&1\\ \hdashline
		1& 0 & 0&0 &0 & 0& 0 & 0 &0\\
		1& 1 & 0&0 &0 & 1& 0 & 0 &0
		\end{array}\right),\quad
		(I-B_{\bar{\omega}})^{-1}=
		\left(\begin{array}{cc:ccc:cc:cc}
		1& 0 & 0 &0 &0 &0&0&0&0\\
		1& 1 & 1 &1 &0 &0&0&1&0\\ \hdashline
		1& 0 & 1 &0 &0 &0&0&0&0\\
		1& 0 & 0 &1 &0 &0&0&0&0\\
		1& 0 & 1 &1 &1 &0&1&1&1\\ \hdashline
		1& 0 & 1& 1 & 0 &1 &0 &1&0\\
		0& 1 & 1& 1 & 0 &1 &1 &1&1\\ \hdashline
		1& 0 & 0&0 &0 & 0& 0 & 1 &0\\
		1& 1 & 0&0 &0 & 1& 0 & 0 &1
		\end{array}\right).
		\end{align*}
		Then,
		\begin{align*}
		(G,A,\omega)=I+~^c((I-B_{\bar{\omega}})^{-1},s)(A-I)
		=I+\left(\begin{array}{cccc}
		1&0&0&1\\
		1&1&1&0\\
		0&0&1&1\\
		1&0&1&0
		\end{array}\right)(A-I).
		\end{align*}
	\end{example}

	Suppose the update schedule $\omega$ is the concatenation of two words $\omega_1$ and $\omega_2$, i.e., $\omega=\omega_1 \omega_2$,
	such that each $G$-vertex appears at least once in both. Clearly, we have $(G,A,\omega)=(G,A,\omega_2)\circ
	(G,A,\omega_1)$. Applying Theorem~\ref{4thm1} leads to the relation
	\begin{multline}\label{4eq15}
	I+~^c((I-B_{\bar{\omega}})^{-1},s)(A-I)\\
	=\Big(I+~^c\big((I-B_{\bar{\omega}_2})^{-1},s_2\big)(A-I)\Big)\Big(I+~^c\big((I-B_{\bar{\omega}_1})^{-1},s_1\big)(A-I)\Big),
	\end{multline}
	where $s$, $s_1$ and $s_2$ encode the multiplicities of the vertices appearing in $\omega$, $\omega_1$ and
	$\omega_2$, respectively.
	
	Note a matrix of the form $I-T_{\pi}$ (recall the construction of $A_{\pi}$ in Section~\ref{sec3}) can always be interpreted as an element in $I(P,\mathbb{K})$ for some $P$. Thus, eq.~\eqref{4eq15}
	implies a relation among three posets (induced by $\omega$, $\omega_1$ and $\omega_2$, respectively).
	In particular, if $I-B_{\bar{\omega}}$, $I-B_{\bar{\omega_1}}$ and $I-B_{\bar{\omega_2}}$ are the zeta functions of the
	posets, eq.~\eqref{4eq15} expresses a relation of the corresponding M\"{o}bius
	functions.
	
	Our next objective is to study this relation in more detail, i.e.,~explicitly derive these three posets and how eq.~\eqref{4eq15} implies a relation among their M\"{o}bius functions.
	As a result, we obtain a cut theorem of posets w.r.t. certain chain decompositions.

	Given a graph $G$ and a word $\omega=w_1\cdots w_m$, where $|\omega|_{v_i}=m_i \geq 1$, we introduce the graph
	$G_{\omega}$ with $V(G_{\omega})=\{u_1,\ldots, u_m\}$ via the following data:
	\begin{itemize}
		\item a mapping $\phi: V(G_{\omega}) \rightarrow V(G)$, where $\phi^{-1}(v_i)=\{u_{1+\sum_{j=0}^{i-1}m_{j}},\ldots, u_{m_i+\sum_{j=0}^{i-1}m_{j}}\}$ and $m_0=0$,
		\item $u_i$ and $u_j$ are adjacent in $G_{\omega}$ if $\phi(u_i)$ and $\phi(u_j)$ are adjacent in $G$, or if
		$\phi(u_i)=\phi(u_j)$.
	\end{itemize}
	Let $\bar{\omega}$ be defined as in Theorem~\ref{4thm1}. Then, $\bar{\omega}$ determines an acyclic orientation of
	$G_{\omega}$, i.e.~a poset $P$, and the matrix $I-B_{\bar{\omega}}\in I(P,\mathbb{K})$. Furthermore, all $u$-elements
	that map into a fixed $v$ form a $G_{\omega}$-clique, representing a particular $P$-chain, which we call a
	\emph{clique-chain}. 
	The construction here is related to heaps~\cite{stemb, viennot}, if we consider the acyclic orientation of $G_{\omega}$ (induced by an update schedule) and the map $\phi:V(G_{\omega})\rightarrow V(G)$ together. It is a very particular heap though, as in a general heap, the involved graph and poset are usually independent instead of having a close relation. 
	
	If $\omega=\omega_1\omega_2$, then
	the above construction gives rise to three graphs, $G_\omega$, $G_{\omega_1}$ and $G_{\omega_2}$, and $G_{\omega_1}$ and $G_{\omega_2}$ can naturally be considered as
	subgraphs of $G_\omega$, such that any $P$ clique-chain $C=(\xi_1<\dots<\xi_s)$ is cut into the two
	clique chains $(\xi_1<\dots<\xi_{h(C)})$ and $(\xi_{h(C)+1}<\dots<\xi_s)$.
	The latter are elements in the posets $P_2(\omega_2)$ and $P_1(\omega_1)$, respectively. By construction, the acyclic orientations on
	$G_{\omega_i}$ that are determined by $\bar{\omega}_i$ are compatible with $Acyc(G_{\omega},\bar{\omega})$. Thus,
	$P_i$ is a subposet of $P$, and all elements in $P_2$ are smaller than $P_1$-elements in $P$, if they are comparable.
	
	\begin{example}
		Consider the graph $G= C_4$ and the update schedule $\omega=013120321$. Following the
		construction introduced above,
		the graph $G_{\omega}$ is shown in Figure~\ref{4fig1} and $\bar{\omega}=u_1u_3u_8u_4u_6u_2u_9u_7u_5$.
		Let $\omega_1=01312$ and $\omega_2=0321$. Accordingly, $\bar{\omega}_1=u_1u_3u_8u_4u_6$ and
		$\bar{\omega}_2=u_2u_9u_7u_5$. The graph $G_{\omega_1}$ is the subgraph induced by the vertices $u_1,u_3,u_8,u_4,u_6$
		while $G_{\omega_2}$ is the subgraph induced by the rest of vertices.
		\begin{figure}[!htb]
			\centering
			\includegraphics[scale=.75]{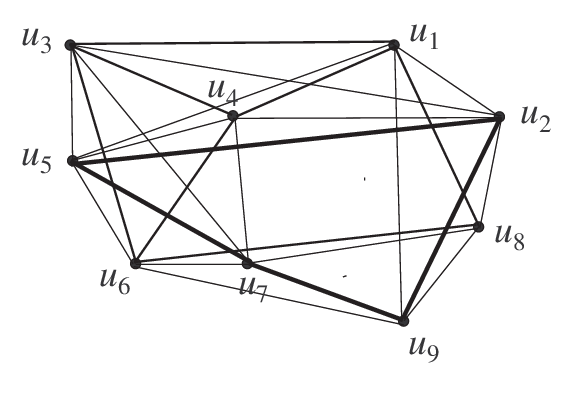}
			\caption{The graph $G_{\omega}$ constructed from $C_4$ and $\omega=013120321$.}\label{4fig1}
		\end{figure}
	\end{example}

	\begin{definition}
		Let $(P,\leq)$ be a poset. A \emph{homogeneous chain decomposition} $\mathcal{C}$ of $P$ is a collection of mutually disjoint
		chains $\{C_1,\ldots, C_n\}$ such that $\bigcup_i C_i=P$, and if $x\in C_i$ and $y\in C_j$ are
		comparable, then all elements in $C_i$ and $C_j$ are comparable.
	\end{definition}
	
	\begin{definition}
		Suppose $\mathcal{C}=\{C_1,\ldots, C_n\}$ is a homogeneous chain decomposition of $P$.
		A \emph{$\mathcal{C}$-cut} of $P$ is a cut of each chain $C_i=(\xi_1<\dots<\xi_s)$ into
		$C_i^{low}=(\xi_1<\dots<\xi_{h(i)})$ and $C_i^{up}=(\xi_{h(i)+1}<\dots<\xi_{s})$
		such that elements in $C_i^{low}$ are smaller than elements in $C_j^{up}$ if comparable.
	\end{definition}
	Any $\mathcal{C}$-cut of $P$ induces the subposets $P^{low}$ and $P^{up}$, in which
	$\mathcal{C}^{low}=\{C_1^{low},\ldots, C_n^{low}\}$ and $\mathcal{C}^{up}=\{C_1^{up},\ldots, C_n^{up}\}$
	are homogeneous chain decompositions of $P^{low}$ and $P^{up}$, respectively.
	See Figure~\ref{3fig2} for an example, where elements on the same vertical form a chain.
	
	\begin{figure}[!htb]
		\centering
		\includegraphics[scale=.6]{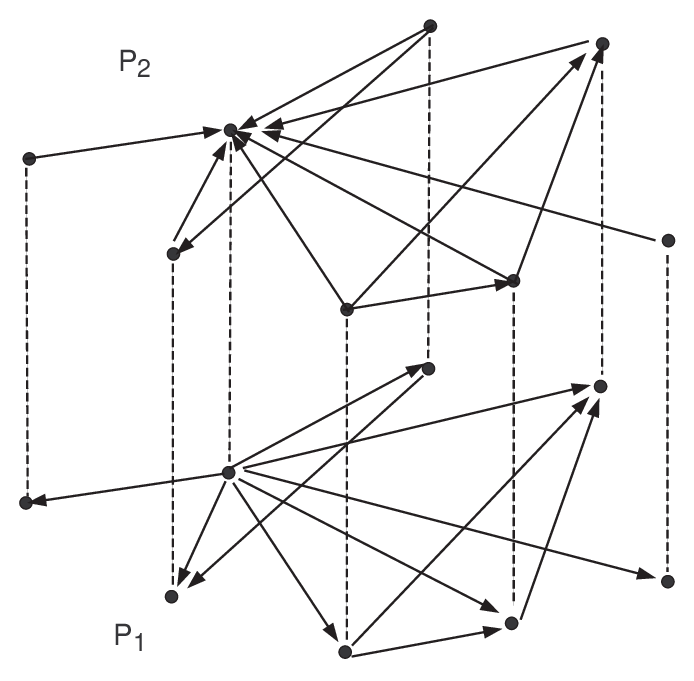}
		\caption{Making a cut on each chain (vertical dashed line) of $P$ leads to the upper sub-poset $P_2$ and the lower sub-poset $P_1$, where all elements in $P_1$ are assumed to be smaller than those in $P_2$ if comparable although the corresponding arrows are not shown here.}\label{3fig2}
	\end{figure}

	Let $G_{\mathcal{C}}$ be the \emph{$\mathcal{C}$-graph} of $P$, having the chains $C_i$ as vertices and
	in which $C_i$ and $C_j$ are adjacent iff their respective elements are comparable. Let $J=I+J_0$, where $J_0$ is the adjacency matrix of the $\mathcal{C}$-graph indexed by $C_1,\ldots, C_n$,
	i.e., $[J_0]_{ij}=1$ if $C_i$ and $C_j$ ($i\neq j$) are adjacent in $G_{\mathcal{C}}$, $[J_0]_{ij}=0$,
	otherwise.
	
	In the homogeneous chain decomposition of $P$, suppose 
	$C_i=\{s_{i1},\ldots ,s_{im_i}\}$ and $s_{ij}<s_{i(j+1)}$.
	Let $W=w_1\cdots w_N$ be any linear extension of $P$ and let $\omega$
	be a word on the vertex set of $G_{\mathcal{C}}$ which is obtained from $W^{[r]}$ (i.e., the reverse of $W$) by replacing $w_i$ with $C_j$ if $w_i$ belongs to $C_j$.
	Let $U$ be the matrix of the M\"{o}bius function of $P$ with the rows and columns indexed by
	$s_{11},\ldots, s_{1m_1},s_{21},\ldots, s_{2m_2},\ldots, s_{n1},\ldots, s_{nm_n}$.
	Let $s=(m_1,\ldots, m_n)$. Then, we have the following lemma:

	\begin{lemma}\label{4lem2}
		The matrix form of the linear SDS $(G_{\mathcal{C}},I-J,\omega)$ is  $I-~^c(U,s)J$.
	\end{lemma}
	
	\begin{proof}
		Theorem~\ref{4thm1} allows us to compute $(G_{\mathcal{C}},I-J,\omega)$ as follows:
		let $B=~^e(-J,s)$ with rows and columns indexed by $s_{11}, \dots, s_{1m_1}, s_{21}, \dots$ (corresponding to those $u$'s in Theorem~\ref{4thm1}).
		The sequence $\bar{\omega}$ induced by $\omega$ is then $W^{[r]}$ (i.e., the reverse of $W$).
		By construction
		$$
		[B_{\bar{\omega}}]_{s_{i_1j_1}s_{i_2j_2}}=
		\begin{cases}
		-1, & \mbox{if $s_{i_1j_1}$ appears before $s_{i_2j_2}$ in $W$, and $C_{i_1}$ and $C_{i_2}$ are comparable}; \\
		\  \  0, & \text{otherwise.}
		\end{cases}
		$$
		In other words, 
		$$
		[I-B_{\bar{\omega}}]_{s_{i_1j_1}s_{i_2j_2}}=
		\begin{cases}
		1, & \mbox{if $s_{i_1j_1}<s_{i_2j_2}$ in $P$}; \\
		0, & \text{otherwise,}
		\end{cases}
		$$
		so that $I-B_{\bar{\omega}}$ is the matrix of the zeta function of $P$ and
		$(I-B_{\bar{\omega}})^{-1}$ is the matrix of the M\"{o}bius function, $U$.
		According to Theorem~\ref{4thm1} we have
		$$
		(G_{\mathcal{C}},I-J,\omega)=I+~^c((I-B_{\bar{\omega}})^{-1},s)(-J+I-I)=I+~^c(U,s)(-J),
		$$
		completing the proof.
	\end{proof}

	Suppose a $\mathcal{C}$-cut leads to $P^{low}$ and $P^{up}$, such that each $C_i$ is split into
	\begin{eqnarray*}
		C_i^{low}  =   \{s_{i1},\ldots, s_{im_i'}\}, \quad 
		C_i^{up}   =  \{s_{im_i'+1},\ldots, s_{im_i}\}.
	\end{eqnarray*}
	Let $U^{low}$ be the matrix of the M\"{o}bius function of $P^{low}$ with the rows and columns being indexed
	by
	$$
	s_{11}, \ldots, s_{1m_1'}, s_{21}, \ldots, s_{2m_2'}, \ldots, s_{n1}, \ldots, s_{nm_n'}
	$$
	and
	$U^{up}$ be the matrix of the M\"{o}bius function of $P^{up}$ with the rows and columns indexed
	by
	$$
	s_{1(m_1'+1)}, \ldots, s_{1m_1}, s_{2(m_2'+1)}, \ldots, s_{2m_2}, \ldots, s_{n(m_n'+1)}, \ldots, s_{nm_n}.
	$$
	
	\begin{lemma}\label{4lem3}
		Given $P$ and a $\mathcal{C}$-cut, specifying $P^{low},P^{up}$, there exists a linear extension $W$
		of $P$ such that $W$ is a concatenation of $W_1$ and $W_2$, i.e.,  $W=W_1W_2$, where $W_1$ and $W_2$
		are linear extensions of $P^{low}$ and $P^{up}$, respectively. 
	\end{lemma}
	
	\begin{proof}
		By assumption, all $P^{low}$-elements are smaller than $P^{up}$-elements if they are comparable
		in $P$. Then, there certainly exists a linear extension of $P$ where the elements in $P^{low}$ come before the elements in $P^{up}$. It is clear that any such linear extension can be written as a concatenation of a linear extension of $P^{low}$ and a linear extension of $P^{up}$, 
		whence the lemma.
	\end{proof}

	Accordingly, starting with an arbitrary poset $P$, we construct the graph $G_C$, whose vertices
	are $P$-chains. By Lemma~\ref{4lem2}, any linear SDS over $G_C$ can be expressed as a matrix and by
	Lemma~\ref{4lem3}, given $P$ and a $\mathcal{C}$-cut, $P$ has a linear extension that can be decomposed 
	into a concatenation of linear extensions of $P^{low}$ and $P^{up}$, respectively. 
	
	As a result, we obtain the following cut theorem:

	\begin{theorem}[Cut theorem]\label{T:rel}
		Let $(P,\leq)$ be a poset having the homogeneous chain decomposition $\mathcal{C}=\{C_1,\dots, C_n\}$ and
		a $\mathcal{C}$-cut inducing posets $P^{low}$ and $P^{up}$. We further assume the $\mathcal{C}$-cut splits each
		chain $C_i=\{s_{i1},\ldots ,s_{im_i}\}$ into $	C_i^{low}  =   \{s_{i1},\ldots, s_{im_i'}\}$ and 
		$C_i^{up}   =  \{s_{im_i'+1},\ldots, s_{im_i}\}$.
		Let $s=(m_1,\ldots, m_n)$, $s^{low}=(m_1',\ldots, m_n')$ and $s^{up}=(m_1'',\ldots, m_n'')$,
		where $m_i'+m_i''=m_i$. Then, we have
		\begin{align}
		^c(U,s)J=~^c(U^{low},s^{low})J~+~^c(U^{up},s^{up})J~-~^c(U^{low},s^{low})J~^c(U^{up},s^{up})J,
		\end{align}
		where $J=I+J_0$ and $J_0$ is the adjacency matrix of the $\mathcal{C}$-graph $G_{\mathcal{C}}$. In particular, if $J$ is invertible, we have
		\begin{align}
		^c(U,s)=~^c(U^{low},s^{low})~+~^c(U^{up},s^{up})~-~^c(U^{low},s^{low})J~^c(U^{up},s^{up}).
		\end{align}
	\end{theorem}
	
	\begin{proof}
		According to Lemma~\ref{4lem3}, we can pick a linear extension $W$ of $P$, such that $W=W_1W_2$ where $W_1$ and $W_2$
		are linear extensions of $P^{low}$ and $P^{up}$, respectively.
		Then, the corresponding word $\omega$
		(associated with $W^{[r]}$) can be split into two words $\omega_1$ and $\omega_2$ such that
		$\omega_i$ is induced by the reverse order of $W_i$.
		Note that the ``$G_{\mathcal{C}}$-graphs" induced by the homogeneous chain decompositions (induced by the cut) of
		$P^{low}$ and $P^{up}$ are exactly the same as $G_{\mathcal{C}}$. Hence, by Lemma~\ref{4lem2},
		we have
		\begin{align*}
		(G_{\mathcal{C}},I-J,\omega_1) =I+~^c(U^{low},s^{low})(-J)\quad \text{and}\quad
		(G_{\mathcal{C}},I-J,\omega_2) =I+~^c(U^{up},s^{up})(-J).
		\end{align*}
		
		The $\omega$-update can be viewed as updating via $\omega_2$ first and then updating via $\omega_1$ and thus
		is tantamount to composing the linear SDS:
		$$
		(G,A,\omega)=(G,A,\omega_1)\circ (G,A,\omega_2).
		$$
		We consequently arrive at the identity
		$$
		I+~^c(U,s)(-J)=(I+~^c(U^{low},s^{low})(-J))(I+~^c(U^{up},s^{up})(-J)),
		$$
		which is equivalent to
		$$
		^c(U,s)J=~^c(U^{low},s^{low})J~+~^c(U^{up},s^{up})J~-~^c(U^{low},s^{low})J~^c(U^{up},s^{up})J.
		$$
	\end{proof}

	\section{Discussion}\label{sec5}
	
	In this paper, we studied linear SDS and their connection with incidence algebras of posets.
	On the one hand, we employed the incidence algebra of posets, in order to derive an explicit
	closed formula for any linear sequential dynamical system as a synchronous linear dynamical
	system. We furthermore proved constructively that for any synchronous linear system, there exists
	a linear SDS equivalent.
	On the other hand, SDS provided insight into posets: in Proposition~\ref{P:moeb} we computed the M\"{o}bius
	function of an arbitrary poset via an SDS. In addition, we presented a cut theorem for the M\"{o}bius
	function of posets w.r.t. homogeneous chain decompositions.  
	
	This line of work can easily be extended to block-sequential systems~\cite{goles}, i.e.~dynamical systems exhibiting
	parallel as well as sequential updates.
	Given a connected simple graph $G$ with $n$ vertices,
	let $\mathfrak{p}=S_1 \cdots S_k$ be an \emph{ordered partition} of $V(G)$ into $k$ blocks, i.e., $S_i\subset V(G)$,
	$S_i\cap S_j=\emptyset $ for $i\neq j$, and $\bigcup_{i=1}^k S_i =V(G)$. The
	$S_i$-vertices are updated in a parallel, while for $i<j$, the block $S_i$ is updated before $S_j$.
	If the local functions $f_{v_i}$ are linear, the system is called a \emph{linear} block-sequential dynamical system. 
	The system (and system map) will be denoted as $(G,f,\mathfrak{p})$ or $(G,A,\mathfrak{p})$.
	
	Suppose $S_i=\{v_{i 1}, \ldots, v_{i k_i}\}$. Obviously, $\sum_{i} k_i =n$. We first observe that parallel updating
	the states of the vertices in $S_i$ is equivalent to applying the transformation
	$$
	F_{S_i}=\left(\begin{array}{cccccc}
	1 & 0 & 0 &\cdots &0 & 0\\
	0 & 1 & 0 &\cdots &0 & 0\\
	\vdots & \vdots & \vdots &\ddots &\vdots & \vdots\\
	a_{i_1 1} &a_{i_1 2}  &a_{i_1 3}&\cdots &a_{i_1 n-1} & a_{i_1 n}\\
	\vdots & \vdots & \vdots &\ddots &\vdots & \vdots\\
	a_{i_{k_i} 1}&a_{i_{k_i} 2}&a_{i_{k_i} 3}&\cdots & a_{i_{k_i} n-1} & a_{i_{k_i} n}\\
	\vdots & \vdots & \vdots &\ddots &\vdots\\
	0 & 0 & 0 & \cdots & 1 &0\\
	0 & 0 & 0 & \cdots & 0 &1
	\end{array}\right).
	$$
	Then, if $X_1=(G,f,\pi)X_0$, we have
	\begin{align}
	X_1=F_{S_{k}}F_{S_{{k-1}}} \cdots F_{S_{1}}(X_0).
	\end{align}
	In analogy to Theorem~\ref{2thm1}, we can obtain a closed matrix formula for the composition of these transformations.
	
	Let $A_{\mathfrak{p}}$ be the matrix defined as follows:
	\begin{itemize}
		\item if $v_{i_x}\in S_i$ and $v_{j_y} \in S_j$ are adjacent in $G$, and $i > j$, then $[A_{\mathfrak{p}}]_{i_x j_y}= a_{i_x j_y}$;
		\item and $[A_{\mathfrak{p}}]_{i_x j_y}= 0$, otherwise.
	\end{itemize}
	
	Then we have the following extension of Theorem~\ref{2thm1}:
	
	\begin{theorem}\label{5thm1}
		Let $(G,A,\mathfrak{p})$ be a linear block-sequential dynamical system. Then,
		\begin{align}\label{5t1}
		(G,A,\mathfrak{p})=(I-A_{\mathfrak{p}})^{-1}(A-A_{\mathfrak{p}}).
		\end{align}
	\end{theorem}
	Note, if $V(G)$ is organized in just a single block (i.e.~we have a synchronous system), then $A_{\mathfrak{p}}=0$,
	by construction and the system map equals $A$. If $V(G)$ is organized into $n$ blocks of size one,
	then $A_{\mathfrak{p}}=A_{\pi}$ for some permutation $\pi$ on $V(G)$ and we recover Theorem~\ref{2thm1}.
	
	Theorem~\ref{3thm2} motivates to study the question of whether or not any synchronous system $\mathbb{K}^n
	\rightarrow \mathbb{K}^n$ can be written as an SDS over $\mathbb{K}$. It is easy to construct
	an SDS for such a system $\phi:\mathbb{K}^n
	\rightarrow \mathbb{K}^n$ by doubling the dimension of the phase space as follows:
	
	\begin{itemize}
		\item let the coordinates in the tuples in $\mathbb{K}^n$ be indexed by $v_1,\dots, v_n$;
		\item we construct a complete bipartite graph $K_{n,n}$ having vertices $v_1,\dots,v_n,u_1,\dots,u_n$; 
		\item the local function $f_{v_i}: x_{v_i}\mapsto x_{u_i}$, and $f_{u_i}: x_{u_i}\mapsto \phi([\dots, x_{v_i},\dots])|_i$, i.e., induced by the projection onto the $i$-th coordinate of $\phi$-value;
		\item update the system sequentially following the order $u_1\cdots u_n v_1\cdots v_n$.		
	\end{itemize}
	Let the coordinates of the system states of this SDS be indexed by $v_1,\dots, v_n,u_1,\dots,u_n$. Then, it is not hard to check that the SDS map restricted to the first $n$ coordinates represents the map $\phi$.

	For any finite poset $P$ with $\hat{0}$ and $\hat{1}$, the number $\mu_P(\hat{0},\hat{1})$ encodes key information. Proposition~\ref{P:moeb} establishes
	the computation of the M\"{o}bius function of $P$ by means of an SDS, constructed from $P$. This motivates to ask if
	key $P$-determinants, in particular, $\mu_P(\hat{0},\hat{1})$, have an SDS-interpretation.
	
	Finally, we are interested in studying applications of Theorem~\ref{T:rel} to specific posets, for instance, the
	poset arising from divisibility relations of natural numbers, or root posets of Coxeter systems.

	\section*{Acknowledgements}
	We are grateful to the anonymous referees for their instructive comments and suggestions.
	We also thank Henning Mortveit and Matthew Macauley for their feedback and input on early versions of this paper.
	
	
	

	%
	%
	%
\end{document}